\newtheorem{theorem}{Theorem}[section]
\newtheorem{lemma}[theorem]{Lemma}
\newtheorem{problem}[theorem]{Problem}
\newtheorem{remark}[theorem]{Remark}
\numberwithin{equation}{section}
\def\r{\mathbb{R}}
\def\rn{\mathbb{R}^N}
\def\z{\mathbb{Z}}
\def\zn{\mathbb{Z}^N}
\def\eps{\varepsilon}
\def\rh{\rightharpoonup}
\def\io{\int_{\Omega}}
\def\irn{\int_{\r^N}}
\def\vp{\varphi}
\def\o{\Omega}
\def\t{\Theta}
\def\tilde{\widetilde}
\def\cC{\mathcal{C}}
\def\cM{\mathcal{M}}
\def\cN{\mathcal{N}}
\def\supp{\mathrm{supp}}
\def\dist{\mathrm{dist}}
\def\what{\widehat}
\author{Mónica Clapp, Alberto Saldaña and Andrzej Szulkin}
\title{A concentration phenomenon for a semilinear Schrödinger equation with periodic self-focusing core}
\date{}
\begin{document}
\maketitle
	
\begin{abstract}
We consider the equation
$$-\Delta u+u=Q_\eps(x)|u|^{p-2}u,\qquad u\in H^1(\rn),$$
where $Q_\eps$ takes the value $1$ on each ball $B_\eps(y)$, $y\in\z^N$, and the value $-1$ elsewhere. We establish the existence of a least energy solution for each $\eps\in(0,\frac{1}{2})$ and show that their $H^1$ and $L^p$ norms concentrate locally at  points of $\z^N$ as $\eps\to 0$.
\medskip

\noindent\textsc{Keywords:} Schrödinger equation, periodic self-focusing core, concentration of least energy solutions.
\medskip

\noindent\textsc{MSC2020:} 35J61, 
35J20, 
35B40, 
35B44 
\end{abstract}

\section{Introduction}

Let $Q_\eps:\rn\to\r$ be the function given by
\begin{equation*}
Q_\eps(x):=
\begin{cases}
1 & \text{if \ }x\in\t_\eps:=\bigcup\limits_{y\in\z^N}B_\eps(y),\\
-1 & \text{otherwise},
\end{cases}
\end{equation*}
where $B_r(x)$ denotes the ball of radius $r$ and center at $x$ in $\rn$. Consider the problem
\begin{equation} \label{eq:problem}
\begin{cases}
-\Delta u+u=Q_\eps(x)|u|^{p-2}u,\\
u\in H^1(\rn),
\end{cases} 
\end{equation}
where $\eps\in(0,\frac{1}{2})$ and  $p\in (2,2^*)$, $2^*:=\frac{2N}{N-2}$ is the critical Sobolev exponent if $N\geq 3$ and $2^*:=\infty$ if $N=2$.

This type of equations is present in some models of optical waveguides propagating through a stratified dielectric medium (see \cite{as,s1,s2}).

If instead of $\t_\eps$ we consider a single ball $B_\eps(0)$, the problem has been studied by Ackermann and Szulkin in \cite{as}. They established the existence of a least energy solution $u_\eps$ for each $\eps>0$ and showed that the $H^1$ and $L^p$ norms of $u_\eps$ are concentrated at the origin when $\eps\to 0$. More precisely, they proved that, for any $\delta>0$,
\begin{equation} \label{eq:concentration_as}
\lim_{\eps\to 0}\frac{\int_{B_\delta(0)}(|\nabla u_\eps|^2+u_\eps^2)}{\irn(|\nabla u_\eps|^2+u_\eps^2)}=1\qquad\text{and}\qquad\lim_{\eps\to 0}\frac{\int_{B_\delta(0)}|u_\eps|^p}{\irn |u_\eps|^p}=1.
\end{equation}
Fang and Wang \cite{fw} showed that, after rescaling, the solutions exhibit a limiting profile, which is a least energy solution of a limiting equation. The concentration behavior described above can be easily derived from this fact. Some properties of the limiting profile, such as its symmetries and  decay at infinity, were shown in \cite{chs}.

Following this program for the unbounded set of balls $\t_\eps$ presents serious challenges. Existence can be easily demonstrated.

\begin{theorem} \label{thm:main_existence}
For each $\eps\in(0,\frac12)$ the problem \eqref{eq:problem} has a positive least energy solution.
\end{theorem}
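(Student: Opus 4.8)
The plan is to apply the standard variational machinery for $\zn$-periodic problems. Let $J_\eps\in C^1(H^1(\rn),\r)$ be the energy functional
$$J_\eps(u)=\frac12\irn(|\nabla u|^2+u^2)-\frac1p\irn Q_\eps(x)|u|^p,$$
whose critical points are exactly the solutions of \eqref{eq:problem}. Since $|Q_\eps|\le1$ and $p\in(2,2^*)$, one checks the mountain pass geometry: $J_\eps$ has a strict local minimum at $0$ with $J_\eps>0$ on a small sphere, while, choosing any $u$ supported in a single ball $B_\eps(y)$ (so that $\irn Q_\eps|u|^p=\irn|u|^p>0$), one gets $J_\eps(tu)\to-\infty$ as $t\to\infty$. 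The crucial structural feature is that $Q_\eps$ is $\zn$-periodic, hence $J_\eps$ is invariant under the action $u\mapsto u(\cdot-y)$, $y\in\zn$. I would define the least energy level through the Nehari set
$$\cN_\eps:=\Big\{u\in H^1(\rn)\setminus\{0\}:\ \irn(|\nabla u|^2+u^2)=\irn Q_\eps(x)|u|^p\Big\},\qquad c_\eps:=\inf_{\cN_\eps}J_\eps,$$
observing that $c_\eps>0$ (on $\cN_\eps$ one has $\|u\|_{H^1}^2\le C\|u\|_{H^1}^p$, so $\|u\|_{H^1}$ is bounded away from $0$), that every nontrivial solution of \eqref{eq:problem} lies in $\cN_\eps$, and that for each $u$ with $\irn Q_\eps|u|^p>0$ there is a unique $t_u>0$ with $t_uu\in\cN_\eps$ and $J_\eps(t_uu)=\max_{t>0}J_\eps(tu)$; as usual this level equals the mountain pass level.

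Next I would take a bounded Palais--Smale sequence $(u_n)$ for $J_\eps$ at level $c_\eps$ (obtained either by Ekeland's variational principle on $\cN_\eps$ or directly from the mountain pass theorem). The only obstruction to compactness is the translation invariance of the problem on $\rn$, and this is precisely where periodicity enters. If $(u_n)$ vanishes, i.e. $\sup_{y\in\rn}\int_{B_1(y)}u_n^2\to0$, then by P.-L.\ Lions' lemma $u_n\to0$ in $L^p(\rn)$, hence $\irn Q_\eps|u_n|^p\to0$, so $\|u_n\|_{H^1}\to0$ and $c_\eps=0$, a contradiction. Therefore there are $\delta>0$ and $y_n\in\zn$ with $\int_{B_1(y_n)}u_n^2\ge\delta$, and by $\zn$-invariance the translates $\tilde u_n:=u_n(\cdot+y_n)$ form again a bounded Palais--Smale sequence at level $c_\eps$, so along a subsequence $\tilde u_n\rh u$ in $H^1(\rn)$ with $u\ne0$.

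Finally I would pass to the limit in $J_\eps'(\tilde u_n)\to0$, using the compactness of $H^1(B_R)\hookrightarrow L^p(B_R)$ on balls, to conclude that $u$ is a nontrivial critical point of $J_\eps$, hence a solution of \eqref{eq:problem}; in particular $u\in\cN_\eps$, so $J_\eps(u)\ge c_\eps$. Conversely, since $J_\eps(u)=\big(\tfrac12-\tfrac1p\big)\irn(|\nabla u|^2+u^2)$, weak lower semicontinuity of the norm gives $J_\eps(u)\le\liminf_n\big(\tfrac12-\tfrac1p\big)\irn(|\nabla\tilde u_n|^2+\tilde u_n^2)=\liminf_n J_\eps(\tilde u_n)=c_\eps$, so $J_\eps(u)=c_\eps$ and $u$ is a least energy solution. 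For positivity, note that $|u|\in\cN_\eps$ and $J_\eps(|u|)\le J_\eps(u)=c_\eps$, so $|u|$ is also a minimizer, hence a solution; then elliptic regularity ($u\in L^\infty_{\mathrm{loc}}$ via a Brezis--Kato/Moser iteration) together with the strong maximum principle applied to $-\Delta|u|+\big(1-Q_\eps|u|^{p-2}\big)|u|=0$ gives $|u|>0$ on $\rn$. The main obstacle throughout is the loss of compactness on the unbounded domain; the periodicity of $Q_\eps$ is exactly what lets one recenter a vanishing-free minimizing sequence and recover a nontrivial weak limit.
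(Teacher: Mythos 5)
Your proposal is correct and follows essentially the same route as the paper: Nehari manifold, Ekeland to get a Palais--Smale minimizing sequence, Lions' lemma to rule out vanishing, translation by lattice points, weak lower semicontinuity to identify the limit as a minimizer, and then $|u|$ plus regularity and the strong maximum principle for positivity. The only imprecision is that Lions' lemma yields centers $\xi_n\in\rn$ rather than $\zn$; as in the paper, one fixes $r>\sqrt N/2$, picks $y_n\in\zn$ with $|y_n-\xi_n|<r$, and works with the slightly larger balls $B_{r+1}(y_n)$, which changes nothing in the argument.
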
 

Concentration is more difficult to understand. We prove the following local result.

\begin{theorem} \label{thm:main_concentration}
Let $\eps_n\to 0$ and, for each $n$, let $u_n$ be a positive solution to \eqref{eq:problem} with $\eps=\eps_n$. Then, after suitable translations by elements of $\zn$, for any open bounded subset $U$ of $\rn$ such that $U\supset[-\frac{1}{2},\frac{1}{2}]^N$ and $\partial U\cap\z^N=\emptyset$, and any $0<\delta<\min\{\frac{1}{2},\,\dist(U\cap\z^N,\partial U)\}$, we have 
\begin{equation} \label{eq:concentration}
\lim_{n\to\infty}\frac{\sum_{y\in U\cap\z^N} \int_{B_\delta(y)}(|\nabla u_n|^2+u_n^2)}{\int_U(|\nabla u_n|^2+u_n^2)}=1\qquad\text{and}\qquad\lim_{n\to\infty}\frac{\sum_{y\in U\cap\z^N}\int_{B_\delta(y)}|u_n|^p}{\int_U |u_n|^p}=1.
\end{equation}
Moreover, $\|u_n\|_{H^1(U)}\to \infty$ as $n\to\infty$.
\end{theorem}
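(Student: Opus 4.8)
The plan is to prove the last assertion by a soft argument and the two limits in \eqref{eq:concentration} by a blow-up analysis near the self-focusing cores $B_{\eps_n}(y)$.

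\textbf{Step 1: $\|u_n\|_{H^1(U)}\to\infty$.} For $y\in\zn$ write $\cO_y:=y+(-\tfrac12,\tfrac12)^N$, so the cubes $\{\cO_y\}_{y\in\zn}$ tile $\rn$. Translate each $u_n$ by the $\tau_n\in\zn$ at which $\|u_n\|_{H^1(\cO_y)}$ is largest and set $a_n:=\|u_n\|_{H^1(\cO_0)}^2=\max_{y\in\zn}\|u_n\|_{H^1(\cO_y)}^2$. Using the translation-invariant Sobolev inequality $\|u\|_{L^p(\cO_y)}\le c\|u\|_{H^1(\cO_y)}$ (valid since $p<2^*$) one gets
\[
\irn u_n^p=\sum_{y\in\zn}\int_{\cO_y}u_n^p\le c^p a_n^{(p-2)/2}\sum_{y\in\zn}\|u_n\|_{H^1(\cO_y)}^2=c^p a_n^{(p-2)/2}\|u_n\|_{H^1(\rn)}^2,
\]
which, combined with the Nehari identity $\|u_n\|_{H^1(\rn)}^2=\irn Q_{\eps_n}u_n^p\le\irn u_n^p$, forces $a_n\ge\kappa>0$ for every $n$. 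On the other hand, if $a_n$ stayed bounded along a subsequence, the translated $u_n$ would be bounded in $H^1_{\mathrm{loc}}(\rn)$; an elliptic bootstrap (again using $p<2^*$) would make them bounded in $L^\infty(\rn)$, and, since $|\t_{\eps_n}\cap K|\to0$ for every compact $K$, a $C^1_{\mathrm{loc}}$ limit $u_0\ge0$ would solve $-\Delta u_0+u_0+u_0^{p-1}=0$ in $\rn$, hence $u_0\equiv0$ (Liouville for bounded subharmonic functions). Testing the equation against $u_n\varphi^2$ with $\varphi\equiv1$ on $\cO_0$ and $\supp\varphi\subset(-1,1)^N$, and using Rellich, we would then obtain $a_n\le\int_{(-1,1)^N}u_n^p+c\int_{(-1,1)^N}u_n^2\to0$, a contradiction. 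Hence $a_n\to\infty$, and since $\cO_0\subset U$ we already get $\|u_n\|_{H^1(U)}^2\ge a_n\to\infty$.

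\textbf{Step 2: localization.} Put $D_n:=U\setminus\bigcup_{y\in U\cap\zn}B_\delta(y)$. For $n$ large, $Q_{\eps_n}\equiv-1$ on a fixed neighborhood of $\overline{D_n}$ and on each annulus $B_\delta(y)\setminus\overline{B_{\eps_n}(y)}$, $y\in U\cap\zn$; there $u_n$ solves $-\Delta v+v+v^{p-1}=0$, so $u_n$ is a nonnegative subsolution of $-\Delta+1$ on $\rn\setminus\overline{\t_{\eps_n}}$. The crucial a priori bound is $\|u_n\|_{L^\infty(\rn)}\le C\eps_n^{-2/(p-2)}$. To obtain it, note first that $\|u_n\|_\infty$ is attained at some $x_n^\ast\in\overline{\t_{\eps_n}}$ (an interior maximum in the region $Q_{\eps_n}=-1$ would give $0\ge-\Delta u_n=-u_n-u_n^{p-1}<0$), say $x_n^\ast\in\overline{B_{\eps_n}(\tau_n)}$; rescaling, $v_n(x):=\|u_n\|_\infty^{-1}u_n(\eps_n x+\tau_n)$ satisfies $\|v_n\|_\infty=1$ and $-\Delta v_n+\eps_n^2 v_n=\sigma_n\,\tilde Q_n\,|v_n|^{p-2}v_n$ with $\sigma_n:=\eps_n^2\|u_n\|_\infty^{p-2}$ and $\tilde Q_n(x)=Q_{\eps_n}(\eps_n x)\to\chi_{B_1}-\chi_{\rn\setminus\overline{B_1}}$ a.e. One shows that $\sigma_n$ stays bounded away from $0$ and $\infty$: if $\sigma_n\to0$ the limit of $v_n$ would be a bounded nonconstant harmonic function on $\rn$, and if $\sigma_n\to\infty$ it would vanish, against $v_n$ being $\simeq1$ near $x_n^\ast/\eps_n$. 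Hence, along a subsequence, $v_n\to v$ in $C^1_{\mathrm{loc}}(\rn\setminus\partial B_1)$, $v$ a nontrivial $\cD^{1,2}$-solution of the limiting problem $-\Delta v=\chi_{B_1}|v|^{p-2}v-\chi_{\rn\setminus\overline{B_1}}|v|^{p-2}v$, so $\|u_n\|_\infty\le C\eps_n^{-2/(p-2)}$. Granting this, comparing $u_n$ with the singular supersolution $x\mapsto C'|x-y|^{-2/(p-2)}$ on $B_{1/4}(y)\setminus\overline{B_{\eps_n}(y)}$ (for $C'$ large; the outer boundary and the other cores contribute only exponentially small corrections because of the subsolution property) yields $u_n(x)\le C''\dist(x,\zn)^{-2/(p-2)}$ on $\{\dist(\cdot,\zn)\ge\eps_n\}\cap U$; in particular $u_n\le C''\delta^{-2/(p-2)}$ uniformly on $D_n$, and a Caccioppoli estimate then shows $\int_{D_n}(|\nabla u_n|^2+u_n^2+u_n^p)$ stays bounded as $n\to\infty$. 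This step — and within it the bound $\|u_n\|_\infty\le C\eps_n^{-2/(p-2)}$ for \emph{arbitrary} positive solutions, i.e.\ a quantitative blow-up analysis with no a priori global energy bound, resting on the subcriticality $p<2^*$, on excluding the degenerate scalings $\sigma_n\to0,\infty$, and on the polynomial decay of the profile $v$ that controls the interaction between neighboring cores — is the main obstacle; Step 1 is elementary.

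\textbf{Step 3: conclusion.} The rescaling of Step 2 also gives $\int_{B_{\eps_n}(0)}u_n^p=\|u_n\|_\infty^p\eps_n^N\int_{B_1}v_n^p\sim c\,\eps_n^{N-2p/(p-2)}\to\infty$, the exponent $N-\tfrac{2p}{p-2}$ being negative exactly because $p<2^*$; hence both $\int_U(|\nabla u_n|^2+u_n^2)\ge a_n$ and $\int_U u_n^p\ge\int_{\cO_0}u_n^p$ tend to $\infty$. Since $U\cap\zn$ is finite and the balls $B_\delta(y)$, $y\in U\cap\zn$, are pairwise disjoint and contained in $U$ (because $\delta<\min\{\tfrac12,\dist(U\cap\zn,\partial U)\}$), we have $\int_U=\sum_{y\in U\cap\zn}\int_{B_\delta(y)}+\int_{D_n}$; as $\int_{D_n}(|\nabla u_n|^2+u_n^2+u_n^p)$ is bounded, both quotients in \eqref{eq:concentration} equal $1-o(1)$, which is the claim.
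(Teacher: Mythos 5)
Your Step 1 is essentially sound and runs parallel to the paper's Lemma \ref{lem:unbounded} (maximizing the local $H^1$ norm over a fundamental domain, normalizing, and deriving a contradiction from the limit equation), with one flaw in the justification: ``Liouville for bounded subharmonic functions'' is false for $N\ge 3$ (e.g.\ $x\mapsto -\min\{1,|x|^{2-N}\}$ is bounded, subharmonic and nonconstant). To conclude $u_0\equiv 0$ from $-\Delta u_0+u_0+u_0^{p-1}=0$, $u_0\ge 0$ bounded, you need to use the equation itself — for instance testing against $e^{-|x|}$ as in the paper's Lemma \ref{lem:vanishing}, or evaluating $\Delta u_0\ge u_0$ at a translated near-maximum. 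This is fixable.

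The genuine gap is Step 2, and you correctly identify it as the load-bearing part of your argument — but it is not a proof, it is a research program. The uniform bound $\|u_n\|_\infty\le C\eps_n^{-2/(p-2)}$ for \emph{arbitrary} positive solutions (no energy bound) is only asserted: your exclusion of $\sigma_n\to 0$ fails because the blow-up limit would be a bounded \emph{nonnegative} harmonic function, hence a constant, and the constant $1$ is perfectly consistent with $v_n\simeq 1$ near the maximum, so there is no contradiction; your exclusion of $\sigma_n\to\infty$ requires a Liouville theorem for the rescaled limit (Gidas--Spruck type, possibly with a sign-changing weight or in a half-space) that you neither state nor verify; and without any a priori bound you have not justified the local compactness needed to extract the limit profile $v$ at all. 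The subsequent barrier comparison also does not close: on the outer sphere $|x-y|=\frac14$ you only know $u_n\le C\eps_n^{-2/(p-2)}\to\infty$, while the barrier $C'|x-y|^{-2/(p-2)}$ is $O(1)$ there, so the maximum principle cannot be applied on the annulus without first bounding $u_n$ away from the cores — which is what you are trying to prove. All of this machinery is unnecessary for the statement, which only concerns \emph{ratios}. The paper's route is much softer: after showing $w_n:=u_n/\|u_n\|_{H^1(\Omega)}\rightharpoonup 0$ in $H^1_{\mathrm{loc}}$, one tests the equation with $\psi u_n$ where $\psi$ is a cutoff equal to $1$ on $U\smallsetminus B_\delta(Z_1)$ and vanishing on $B_{\delta/2}(Z_1)$; since $Q_{\eps_n}\equiv -1$ on $\supp\psi$ for large $n$, \emph{both} $\int_{U\smallsetminus B_\delta(Z_1)}(|\nabla u_n|^2+u_n^2)$ and $\int_{U\smallsetminus B_\delta(Z_1)}|u_n|^p$ are bounded by the single commutator term $\bigl|\int u_n\nabla u_n\cdot\nabla\psi\bigr|\le C\|u_n\|_{H^1(\Omega)}^2\int_V|w_n||\nabla w_n|=o\bigl(\|u_n\|_{H^1(\Omega)}^2\bigr)$, and a second cutoff computation shows the denominators are bounded below by $c\,\|u_n\|_{H^1(\Omega)}^2$. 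No pointwise estimate on $u_n$ is ever needed. As written, your proof establishes the first limit in \eqref{eq:concentration} and the final claim only modulo the unproven $L^\infty$ bound, so it cannot be accepted.
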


Ackermann and Szulkin showed that, for a finite number of balls, concentration occurs (globally) at a single point \cite[Theorem 4.1]{as}. It is unclear whether this is still true in our case. It is also unclear whether a limiting profile exists.

We would also like to mention some other papers where the set $\{x\in\rn: Q_\eps(x)>0\}$ is assumed to shrink to a point or a finite number of points as $\eps\to 0$. In \cite{liu} the case of a single ball and $p\in(1,2)$ is considered. \cite{sy} is concerned with a limiting profile of solutions for a Schrödinger-Poisson system with $\{x\in\rn: Q_\eps(x)>0\}$ shrinking to one or two points. It also extends one of the results contained in \cite{fw}. In \cite{css}, \cite{jsx} and \cite{zz} systems of two or more equations are considered under different sets of assumptions. 

The paper is organized as follows. Theorem \ref{thm:main_existence} is proved in Section \ref{sec:existence} and Theorem \ref{thm:main_concentration}  in Section \ref{sec:concentration}. Finally, in Section \ref{op} we discuss some open problems.

\section{Existence of a positive least energy solution}\label{sec:existence}

Recall that $N\geq 2$, $\eps\in(0,\frac{1}{2})$ and  $p\in (2,2^*)$. We denote the inner product and the norm in $H^1(\rn)$ by
\begin{equation}\label{eq:inner product}
\langle u,v\rangle:=\irn(\nabla u\cdot\nabla v + uv)\qquad\text{and}\qquad \|u\|^2:=\irn(|\nabla u|^2 + u^2).
\end{equation}
Fix $\eps\in(0,\frac12)$. The solutions of \eqref{eq:problem} are the critical points of the functional $J_\eps:H^1(\rn)\to\r$ defined by
$$J_\eps(u):=\frac{1}{2}\irn(|\nabla u|^2 +  u^2) - \frac{1}{p}\irn Q_\eps|u|^p.$$
$J_\eps$ is of class $\cC^2$ and its derivative at $u$ is given by
$$
J'_\eps(u)v=\irn(\nabla u\cdot\nabla v + uv) - \irn Q_\eps|u|^{p-2}uv , \qquad v\in H^1(\rn).
$$
The nontrivial critical points of $J_\eps$ belong to the Nehari manifold
$$\cN_\eps:=\{u\in H^1(\rn):u\neq 0, \ J'_\eps(u)u=0\},$$
which is a Hilbert submanifold of $H^1(\rn)$ of class $\cC^2$ and a natural constraint for $J_\eps$. Note that
\begin{equation} \label{eq:energy_on_nehari}
J_\eps(u)=\frac{p-2}{2p}\|u\|^2=\frac{p-2}{2p}\irn Q_\eps|u|^p\qquad\text{if \ }u\in\cN_\eps.
\end{equation}
By Sobolev's inequality there is $C>0$, depending only on $N$ and $p$, such that 
$$
\|u\|^2=\irn Q_\eps|u|^p\leq\int_{\t_\eps}|u|^p\leq\irn|u|^p\leq C\|u\|^p\qquad\text{for every \ }u\in\cN_\eps
$$
(recall that $\t_\eps:=\bigcup\limits_{y\in\z^N}B_\eps(y)$). Hence, there exists $C_0>0$, depending only on $N$ and $p$, such that
\begin{align}\label{C0}
c_\eps:=\inf_{u\in\cN_\eps}J_\eps(u)\geq C_0>0\qquad\text{for every \ }\eps\in(0,\textstyle\frac12).
\end{align}

\begin{theorem}\label{thm:existence of minimizer}
Let $u_k\in \cN_\eps$ be such that $J_\eps(u_k)\to c_\eps$. Then there exist $y_k\in\z^N$ such that, after passing to a subsequence, the sequence $(\what u_n)$ given by $\what u_k(x):=u_k(x+y_k)$ converges strongly in $H^1(\rn)$ to a solution $\what u$ of \eqref{eq:problem} that satisfies $J_\eps(\what u)=c_\eps$.
\end{theorem}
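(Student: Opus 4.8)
The plan is to run the standard concentration–compactness argument for a minimizing sequence on the Nehari manifold, using the $\mathbb{Z}^N$-periodicity of $Q_\eps$ to recover compactness. First I would pass to the normalized minimizing sequence: since $J_\eps(u_k)=\frac{p-2}{2p}\|u_k\|^2\to c_\eps$, the sequence $(u_k)$ is bounded in $H^1(\rn)$, so after a subsequence $u_k\rightharpoonup u$ weakly. The first dichotomy to rule out is vanishing: if $\sup_{z\in\rn}\int_{B_1(z)}u_k^2\to 0$, then by Lions' lemma $u_k\to 0$ strongly in $L^p(\rn)$, which combined with $\|u_k\|^2=\irn Q_\eps|u_k|^p\le\irn|u_k|^p$ forces $\|u_k\|\to 0$, contradicting \eqref{C0}. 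Hence there is $\alpha>0$ and points $z_k\in\rn$ with $\int_{B_1(z_k)}u_k^2\ge\alpha$; replacing $z_k$ by the nearest lattice point $y_k\in\zn$ (which only enlarges the ball by a fixed amount) and setting $\what u_k(x):=u_k(x+y_k)$, the periodicity of $Q_\eps$ gives $\what u_k\in\cN_\eps$, $J_\eps(\what u_k)=J_\eps(u_k)\to c_\eps$, and $\int_{B_{1+\sqrt N}(0)}\what u_k^2\ge\alpha$. So the weak limit $\what u$ of (a further subsequence of) $\what u_k$ is nonzero.

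Next I would show $\what u$ is a critical point of $J_\eps$ and lies on $\cN_\eps$. Weak convergence and the standard Brezis–Lieb / Rellich arguments give $J_\eps'(\what u)=0$ in the limit (testing against fixed $v\in\cC_c^\infty$ and using local compactness of the embedding into $L^p$), so $\what u\neq 0$ is a nontrivial solution and thus $\what u\in\cN_\eps$, whence $J_\eps(\what u)\ge c_\eps$. For the reverse inequality and, simultaneously, strong convergence, write $\what u_k=\what u+v_k$ with $v_k\rightharpoonup 0$. By Brezis–Lieb, $\|\what u_k\|^2=\|\what u\|^2+\|v_k\|^2+o(1)$ and $\irn Q_\eps|\what u_k|^p=\irn Q_\eps|\what u|^p+\irn Q_\eps|v_k|^p+o(1)$; since $\what u_k,\what u\in\cN_\eps$, subtracting the Nehari identities yields $\|v_k\|^2=\irn Q_\eps|v_k|^p+o(1)\le\irn|v_k|^p+o(1)$. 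If $\|v_k\|\not\to 0$ then, bounded away from zero, $v_k$ would (after another translation) carry at least the energy $c_\eps$, giving $c_\eps=\lim J_\eps(\what u_k)=J_\eps(\what u)+\frac{p-2}{2p}\lim\|v_k\|^2\ge c_\eps+C_0$, a contradiction; the cleanest way to make this precise is to note $\frac{p-2}{2p}\|v_k\|^2\ge J_\eps(\what u_k)-J_\eps(\what u)\to c_\eps-J_\eps(\what u)\le 0$, forcing $\|v_k\|\to 0$. Hence $\what u_k\to\what u$ strongly in $H^1(\rn)$ and $J_\eps(\what u)=c_\eps$.

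The main obstacle is the lack of a definite sign in the nonlinearity: because $Q_\eps$ takes the value $-1$ on $\rn\setminus\t_\eps$, the functional $J_\eps$ is not the standard superlinear functional, and in particular one must be careful that the "splitting'' term $\irn Q_\eps|v_k|^p$ could a priori be negative, so that $v_k$ need not sit on $\cN_\eps$ nor carry nonnegative energy. This is handled by the inequality $\irn Q_\eps|v_k|^p\le\irn|v_k|^p$ together with the lower bound \eqref{C0}: either $v_k\to 0$ in $L^p$ (hence in $H^1$ by the Nehari-type identity above), or a nonvanishing piece of $v_k$ can be translated to produce an element of $\cN_\eps$ of energy $\ge C_0$, contradicting the energy accounting. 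A secondary technical point is the passage from the arbitrary concentration points $z_k$ to lattice points $y_k$, which is immediate since $\dist(z_k,\zn)\le\sqrt N/2$ is bounded. Finally, positivity of the least energy solution follows by replacing $\what u$ with $|\what u|$ (which lies on $\cN_\eps$ with the same energy, using $Q_\eps|\,|\what u|\,|^p=Q_\eps|\what u|^p$) and invoking the strong maximum principle for $-\Delta+1$, as in \cite{as}; this also re-derives Theorem~\ref{thm:main_existence}.
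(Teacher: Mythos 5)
Your overall strategy is the same as the paper's: rule out vanishing via Lions' lemma using $\|u_k\|^2=\irn Q_\eps|u_k|^p\le\irn|u_k|^p$ and the lower bound \eqref{C0}, translate by nearby lattice points (legitimate since $Q_\eps$ is $\z^N$-periodic), identify the weak limit as a nontrivial solution, and upgrade to strong convergence by energy comparison. Your ``cleanest way'' at the end is in fact exactly the paper's argument ($c_\eps\le\frac{p-2}{2p}\|\what u\|^2\le\liminf\frac{p-2}{2p}\|\what u_k\|^2=c_\eps$, then weak convergence plus convergence of norms); the Brezis--Lieb decomposition you set up around it is correct but not needed.

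There is, however, one genuine gap. You are given only that $u_k\in\cN_\eps$ and $J_\eps(u_k)\to c_\eps$; nothing in the hypothesis says that $J_\eps'(u_k)\to 0$. Yet the step where you ``test against fixed $v\in\cC_c^\infty$'' to conclude $J_\eps'(\what u)=0$ requires precisely that $J_\eps'(\what u_k)v\to 0$, i.e.\ that the sequence is Palais--Smale. Without this, you cannot conclude that the weak limit is a critical point, nor that it satisfies the Nehari identity $\|\what u\|^2=\irn Q_\eps|\what u|^p$ --- and both $J_\eps(\what u)=\frac{p-2}{2p}\|\what u\|^2$ and $J_\eps(\what u)\ge c_\eps$, on which your concluding inequality rests, depend on $\what u\in\cN_\eps$. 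Note also that you cannot sidestep this by projecting $\what u$ onto $\cN_\eps$, since $\irn Q_\eps|\what u|^p$ is not known a priori to be positive (the sign-changing weight is exactly the difficulty here). The paper fills this gap by invoking Ekeland's variational principle to replace $(u_k)$ by a nearby minimizing sequence with $J_\eps'(u_k)\to 0$ (using that $\cN_\eps$ is a natural constraint); you should insert this step at the start of your argument. With that addition the rest of your proof goes through.
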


\begin{proof}
It follows immediately from \eqref{eq:energy_on_nehari} that $(u_k)$ is bounded. Using Ekeland's variational principle \cite[Theorem 8.5]{w} we may assume that $J_\eps'(u_k)\to 0$ in $H^1(\rn)'$. Furthermore, by \eqref{C0},
$$\frac{2p}{p-2}C_0\leq \|u_k\|^2=\irn Q_\eps|u_k|^p\leq\int_{\t_\eps}|u_k|^p\leq\irn|u_k|^p.$$
Hence $u_k\not\to 0$ in $L^p(\rn)$ and it follows from Lions' lemma \cite[Lemma 1.21]{w} that there exist $\alpha>0$ and $\xi_k\in\rn$ such that
$$\int_{B_1(\xi_k)}|u_k|^2=\sup_{x\in\rn}\int_{B_1(x)}|u_k|^2\geq \alpha>0.$$
Fix $r>\frac{\sqrt{N}}{2}$ and choose $y_k\in\z^N$ such that $|y_k-\xi_k|<r$. Then, 
$$\int_{B_{r+1}(y_k)}|u_k|^2\geq\int_{B_1(\xi_k)}|u_k|^2\geq \alpha>0.$$
The function $\what u_k(x):=u_k(x+y_k)$ satisfies $\|\what u_k\|=\|u_k\|$ and 
\begin{equation}\label{eq:not_zero}
\int_{B_{r+1}(0)}|\what u_k|^2\geq \alpha>0.
\end{equation}
So, passing to a subsequence, $\what u_k\rh\what u$ weakly in  $H^1(\rn)$, $\what u_k\to\what u$ in $L^p_\mathrm{loc}(\rn)$ and a.e. in $\rn$. Let $\vp\in\cC^\infty_c(\rn)$ and set $\vp_k(x):=\vp(x-y_k)$. As
$$o(1)=J_\eps'(u_k)\vp_k=J_\eps'(\what u_k)\vp=\langle\what u_k,\vp\rangle  - \irn Q_\eps|\what u_k|^{p-2}\what u_k\vp,$$
we see passing to the limit that $\what u$ solves \eqref{eq:problem}. It follows from \eqref{eq:not_zero} that $\what u\neq 0$. Therefore, $\what u\in\cN_\eps$ and
$$c_\eps\leq\frac{p-2}{2p}\|\what u\|^2\leq\lim_{k\to\infty}\frac{p-2}{2p}\|\what u_k\|^2=c_\eps.$$
As a consequence, $\what u_k\to\what u$ strongly in $H^1(\rn)$ and $J_\eps(\what u)=c_\eps$, as claimed.
\end{proof}
\medskip

\begin{proof}[Proof of Theorem~\ref{thm:main_existence}]
Theorem \ref{thm:existence of minimizer} establishes the existence of a least energy solution $\what u$ to the problem \eqref{eq:problem}. Then, $u:=|\what u|$ is a non-negative least energy solution. A standard regularity argument shows that $u\in \cC^1(\rn)$. Applying the strong maximum principle \cite[Theorem 1.28]{dp} to the equation
$$-\Delta u+(1-Q_\eps^-|u|^{p-2})u=Q_\eps^+|u|^{p-2}u\geq 0,$$
where $Q_\eps^+:=\max\{Q_\eps,0\}$ and $Q_\eps^-:=\min\{Q_\eps,0\}$, we see that $u>0$ in $\rn$.
\end{proof}

\section{Concentration of positive solutions}\label{sec:concentration}

Fix a connected open subset $\o$ of $\rn$ such that $[-\frac{1}{2},\frac{1}{2}]^N\subset\o\subset(-\frac{3}{4},\frac{3}{4})^N$. Then, 
$$\dist(\o,\,\z^N\smallsetminus\{0\})\geq \frac{1}{4}\qquad\text{and}\qquad\rn=\bigcup\limits_{y\in\z^N}(y+\o).$$
Let $\eps_n\to 0$ and, for each $n$, let $u_n$ be a positive solution to \eqref{eq:problem} with $\eps=\eps_n$. After a suitable translation by elements of $\zn$ we may assume that
\begin{equation}\label{eq:translation}
\|u_n\|^2_{H^1(\o)}=\io(|\nabla u_n|^2 + u_n^2)\geq\int_{y+\o}(|\nabla u_n|^2 + u_n^2)=\|u_n\|^2_{H^1(y+\o)}\qquad\text{for all \ }y\in\z^N.
\end{equation}
To study the behavior of $\|u_n\|^2_{H^1(\o)}$ we use the following lemma.

\begin{lemma} \label{lem:vanishing}
Let
$w\in H^1_\mathrm{loc}(\rn)$ be such that $w\geq 0$ and
$$\irn (\nabla w\cdot\nabla\vp + w\vp)=-C\irn w^{p-1}\vp\qquad\text{for every \ }\vp\in\cC^\infty_c(\rn)$$
and some $C>0$. If there exists $r>0$ such that
$$M:=\sup_{x\in\rn}\int_{B_{2r}(x)}|w|^p<\infty,$$
then $w=0$.
\end{lemma}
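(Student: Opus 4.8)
The plan is to test the equation against $w\eta^2$ for a suitable family of cut-off functions $\eta$, exploiting the fact that the nonlinear term has the ``wrong'' sign (so that it works in our favour rather than against us), and then to bootstrap the resulting Caccioppoli-type inequality; the whole point will be that the subcriticality $p<2^*$ is precisely what makes the iteration converge.

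First I would note that, by a standard density argument --- using that $w\in H^1_{\mathrm{loc}}(\rn)$ and that the finiteness of $M$ forces $w^p\in L^1_{\mathrm{loc}}(\rn)$, so that the nonlinear term is locally integrable --- the identity in the hypothesis remains valid for every $\vp\in H^1(\rn)$ with compact support. Taking $\vp:=w\eta^2$ with $\eta\in\cC^\infty_c(\rn)$ and $0\le\eta\le 1$, expanding $\nabla(w\eta^2)=\eta^2\nabla w+2w\eta\nabla\eta$ and estimating $2\,|w\eta\,\nabla w\cdot\nabla\eta|\le\eta^2|\nabla w|^2+w^2|\nabla\eta|^2$, the common term $\irn\eta^2|\nabla w|^2$ cancels and, since $w\ge 0$ and $C>0$, one is left with
$$\irn w^2\eta^2+C\irn w^p\eta^2\ \le\ \irn w^2|\nabla\eta|^2.$$
Now choose $\eta$ with $\eta\equiv1$ on $B_R(0)$, $\supp\eta\subset B_{2R}(0)$ and $|\nabla\eta|\le 2/R$, and apply Hölder's inequality with exponents $\frac p2$ and $\frac p{p-2}$ on the annulus $B_{2R}(0)\smallsetminus B_R(0)$. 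Writing $f(R):=\int_{B_R(0)}w^p$ and $\gamma:=N-2-\frac{2N}{p}$, this gives
$$f(R)\ \le\ c\,R^{\gamma}\,f(2R)^{2/p}\qquad\text{for all }R>0,$$
with $c=c(N,p,C)>0$. The crucial observation is that $\gamma<0$ precisely because $p<2^*$ (and automatically if $N=2$).

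For the bootstrap I would start from the crude bound $f(R)\le A_0R^{N}$ for $R\ge 1$, which follows from the hypothesis on $M$ by covering $B_R(0)$ with $O(R^N)$ balls of radius $2r$. Feeding this into the inequality above and iterating, one proves by induction that $f(R)\le A_kR^{b_k}$ for all $R\ge 1$ and all $k\ge 0$, where $b_0=N$, $b_{k+1}=\gamma+\frac{2}{p}\,b_k$ and $A_{k+1}=c\,2^{(2/p)b_k}A_k^{2/p}$. Since $\frac{2}{p}<1$, the exponents $b_k$ decrease monotonically to $\gamma/(1-\frac{2}{p})<0$, so $b_k<0$ once $k$ is large enough; and the constants $A_k$ stay bounded because, with $b_k$ confined to the interval $[\gamma/(1-\frac{2}{p}),N]$, their recursion is a contraction on a logarithmic scale. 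Fixing such a $k$ gives $f(R)\le A_kR^{b_k}\to 0$ as $R\to\infty$; since $f$ is non-decreasing, this forces $\int_{\rn}w^p=\lim_{R\to\infty}f(R)=0$, i.e., $w=0$.

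I expect the only genuinely delicate point to be the bootstrap: verifying that the iterated exponents $b_k$ do become negative --- which is exactly subcriticality, since at the critical exponent $p=2^*$ one has $\gamma=0$ and the scheme stalls --- and that the constants $A_k$ remain under control. The Caccioppoli inequality, the Hölder estimate and the density argument are routine, and the $L^p$-control of $w$ is used only in those two places (local integrability of $w^p$, hence admissibility of the test function, and the crude starting bound).
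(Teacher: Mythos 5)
Your proof is correct, but it takes a genuinely different route from the paper's. The paper first upgrades the regularity of $w$: since $w\ge 0$ satisfies $-\Delta w + w = -Cw^{p-1}\le 0$ weakly, it is a subsolution of $\Delta u - u=0$, and the local boundedness estimate \cite[Theorem 8.17]{gt} together with the uniform bound $M$ gives $w\in L^\infty(\rn)$; interior elliptic regularity then makes $w$ a classical solution with $\nabla w$ uniformly bounded. The conclusion comes from pairing the equation with the explicit function $\psi(x)=e^{-|x|}$, which satisfies $-\Delta\psi+\psi=\frac{N-1}{|x|}\psi\ge 0$: integrating by parts (legitimate thanks to the uniform bounds and the decay of $\psi$) yields $0\ge -C\irn w^{p-1}\psi = (N-1)\irn \frac{\psi w}{|x|}\ge 0$, forcing $w=0$. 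Your argument instead stays entirely at the level of weak solutions: the Caccioppoli inequality obtained from the admissible test function $w\eta^2$ (the sign of the nonlinearity working in your favour), followed by Hölder on an annulus and an iteration of the resulting decay estimate for $f(R)=\int_{B_R}w^p$, with the uniform bound $M$ entering only through the crude polynomial starting bound $f(R)\le A_0R^N$. Your computation of $\gamma=N-2-\frac{2N}{p}<0$, the monotone convergence of $b_k$ to $\gamma/(1-\frac2p)<0$, and the control of the constants $A_k$ (only finitely many of which are actually needed) all check out; the density step is justified because $w\eta^2\in H^1$ has compact support and $w^{p-1}\in L^{p/(p-1)}_{\mathrm{loc}}$ pairs with $L^p_{\mathrm{loc}}$-convergence of the mollifications, which Sobolev embedding supplies precisely when $p\le 2^*$. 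What each approach buys: the paper's proof is very short once the regularity machinery is invoked and does not use subcriticality inside the lemma (only $N\ge2$, through the sign of $\frac{N-1}{|x|}$); yours is more elementary and self-contained, avoiding regularity theory altogether, at the cost of using $p<2^*$ essentially — which is harmless here since the paper assumes it throughout. A small remark: your Caccioppoli inequality $\int w^2\eta^2\le\int w^2|\nabla\eta|^2$ already closes an even simpler iteration on $g(R)=\int_{B_R}w^2$ (the exponent drops by $2$ at each step), so the detour through the $L^p$ quantity, while correct, is not strictly necessary.
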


\begin{proof}
Let $w$ be as in the statement. Since $w$ is a weak subsolution of $\Delta u-u=0$, by \cite[Theorem 8.17]{gt} there exists $C>0$ depending at most on $N, r$ and $p$ such that
\begin{align}\label{ubd}
w(x)\leq\sup_{B_r(x)}w \le Cr^{-N/p}|w|_{L^p(B_{2r}(x))} \le Cr^{-N/p}M\qquad \text{for all \ }x\in \rn.
\end{align}
Therefore, $w\in L^\infty(\rn)$ and, by interior elliptic regularity theory (see, for instance, \cite[Section 6.3.1]{ev} and  \cite[Theorem 8.24 and Corollary 6.3]{gt}), we also deduce that $w$ is a classical solution and that $\nabla w$ is uniformly bounded in $\rn$.

Let $\psi(x):=e^{-|x|}$ for $x\in \rn$. A direct computation shows that
\begin{align*}
-\Delta \psi(x)+\psi(x) =  \frac{N-1}{|x|} \psi(x)\qquad \text{for \ }x\in \rn\smallsetminus\{0\}.
\end{align*}
Then, integrating by parts and using that $w$ is a classical solution, $N\geq 2$, and $\nabla w$ is uniformly bounded in $\rn$, we obtain
\begin{align*}
0\geq -C\int_{\rn}w^{p-1}\psi
=\int_{\rn} (-\Delta w + w)\psi
=\irn (-\Delta \psi + \psi)w
= (N-1)\int_{\rn} \frac{\psi(x) w(x)}{|x|} \, dx\geq 0,
\end{align*}
but this is only possible if $w=0$ in $\rn$, as claimed.
\end{proof}

\begin{lemma} \label{lem:unbounded}
$\|u_n\|_{H^1(\o)}\to\infty.$
\end{lemma}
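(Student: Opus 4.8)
The plan is to argue by contradiction. Suppose $\|u_n\|_{H^1(\o)}\not\to\infty$; then, after passing to a subsequence, there is $K>0$ with $\|u_n\|_{H^1(\o)}\le K$ for all $n$. By the normalization \eqref{eq:translation} we get $\|u_n\|_{H^1(y+\o)}\le K$ for every $y\in\z^N$, and since $\rn=\bigcup_{y\in\z^N}(y+\o)$ with $\o\subset(-\tfrac34,\tfrac34)^N$, each ball $B_R(x)$ is covered by a number of translates $y+\o$ depending only on $R$. Hence $\sup_{x\in\rn}\|u_n\|_{H^1(B_R(x))}\le C(R,K)$ for all $n$ and all $R>0$.

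The first step is to upgrade these to an $L^\infty$ bound. Each $u_n$ solves $-\Delta u_n+u_n=Q_{\eps_n}u_n^{p-1}$ with $|Q_{\eps_n}|\le1$, and since $p<2^*$ a standard subcritical bootstrap (Brezis--Kato together with Moser iteration, cf.\ \cite[Theorem 8.17]{gt}), run with constants controlled only by the uniform local $H^1$ bounds, yields $\|u_n\|_{L^\infty(\rn)}\le C$ for all $n$. Interior Schauder estimates then bound $\|u_n\|_{C^{1,\alpha}(B_R(x))}$ uniformly in $x$ and $n$, so along a further subsequence $u_n\to w$ in $C^1_{\mathrm{loc}}(\rn)$, with $w\ge0$ and $w\in L^\infty(\rn)\cap C^1(\rn)$. \emph{This uniformity of the regularity bounds in $n$ is the step I expect to be the main obstacle}; the rest is a clean compactness argument.

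Next I would locate a point where $u_n$ is large and recenter there. Each $u_n\in H^1(\rn)$ is continuous, tends to $0$ at infinity, and is smooth inside every ball $B_{\eps_n}(y)$ (where $Q_{\eps_n}\equiv1$) as well as outside $\overline{\t_{\eps_n}}$ (where it is subharmonic, since $\Delta u_n=u_n+u_n^{p-1}>0$). A maximum principle argument then shows that $u_n$ attains its global maximum at a point $x_n\in B_{\eps_n}(y_n)$ for a unique $y_n\in\z^N$ (the balls are disjoint as $\eps_n<\tfrac12$): the max cannot lie in the region where $u_n$ is subharmonic, nor on $\partial\t_{\eps_n}$ by Hopf's lemma. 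At $x_n$ we have $\Delta u_n(x_n)=u_n(x_n)-u_n(x_n)^{p-1}\le0$ with $u_n(x_n)>0$, hence $u_n(x_n)\ge1$. Put $\what u_n(x):=u_n(x+y_n)$. Since $Q_{\eps_n}$ is $\z^N$-periodic, $\what u_n$ solves the same equation, and $\|\what u_n\|_{H^1(z+\o)}=\|u_n\|_{H^1(z+y_n+\o)}\le K$ for every $z\in\z^N$, so $\what u_n$ inherits all the uniform bounds above. Passing to a subsequence, $\what u_n\to\what w$ in $C^1_{\mathrm{loc}}(\rn)$, and $\what u_n(\xi_n)=u_n(x_n)\ge1$ where $\xi_n:=x_n-y_n\in B_{\eps_n}(0)$, so $\xi_n\to0$.

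Finally I would pass to the limit in the weak form and invoke Lemma~\ref{lem:vanishing}. For $\vp\in\cC^\infty_c(\rn)$ we have $\irn(\nabla\what u_n\cdot\nabla\vp+\what u_n\vp)=\irn Q_{\eps_n}\what u_n^{p-1}\vp$; the left side tends to $\irn(\nabla\what w\cdot\nabla\vp+\what w\vp)$, while on the right $\what u_n^{p-1}\vp\to\what w^{p-1}\vp$ uniformly on $\supp\vp$ and $Q_{\eps_n}\to-1$ a.e.\ (because $|\t_{\eps_n}\cap\supp\vp|\to0$), so dominated convergence gives $-\irn\what w^{p-1}\vp$. Thus $\what w\ge0$ is a weak solution of $-\Delta\what w+\what w=-\what w^{p-1}$, and $\sup_{x}\int_{B_{2r}(x)}\what w^{\,p}\le|B_{2r}|\,\|\what w\|_{\infty}^{p}<\infty$, so Lemma~\ref{lem:vanishing} forces $\what w\equiv0$. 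But $C^1_{\mathrm{loc}}$ convergence is uniform on $\overline{B_1(0)}$ and $\xi_n\to0$, hence $\what w(0)=\lim_n\what u_n(\xi_n)\ge1$, a contradiction. Therefore $\|u_n\|_{H^1(\o)}\to\infty$.
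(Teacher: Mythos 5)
Your argument is correct, but it takes a genuinely different route from the paper's. The paper normalizes, setting $w_n:=u_n/\|u_n\|_{H^1(\o)}$, shows via Lemma~\ref{lem:vanishing} that the weak $H^1_{\mathrm{loc}}$ limit of $(w_n)$ vanishes, and then gets the contradiction $1\le 0$ from the normalization $\|w_n\|_{H^1(\o)}=1$ by testing the equation with $\chi u_n$ for a cutoff $\chi$ and dividing by $\|u_n\|^2_{H^1(\o)}$; only weak $H^1$ and strong $L^p_{\mathrm{loc}}$ convergence are used, and no pointwise information about $u_n$ is needed. You instead keep $u_n$ unnormalized, upgrade the uniform local $H^1$ bounds to uniform $L^\infty$ and $C^{1,\alpha}_{\mathrm{loc}}$ bounds, locate the global maximum, show it is $\ge 1$ and lies inside one of the balls $B_{\eps_n}(y_n)$, recenter there, and contradict the vanishing of the $C^1_{\mathrm{loc}}$ limit forced by Lemma~\ref{lem:vanishing}. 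Your approach buys more: a uniform lower bound $\sup_{\rn}u_n\ge 1$ and the fact that the maxima sit in the shrinking balls, neither of which the paper's softer argument yields. The price is the uniform-in-$n$ regularity, which you rightly flag as the delicate point; it does go through, because $p<2^*$ gives $\|u_n^{p-2}\|_{L^{s}(B_R(x))}\le C$ uniformly in $x$ and $n$ for some $s>N/2$, so the De Giorgi--Nash--Moser local boundedness constant is uniform, and then Brezis--Kato plus \cite[Theorem 8.17]{gt} applies. Two minor corrections: since $Q_{\eps_n}$ is discontinuous, the $C^{1,\alpha}$ bounds should come from interior $W^{2,q}$ (Calder\'on--Zygmund) estimates with $q>N$ rather than Schauder estimates; and to exclude a maximum in the exterior region and to apply Hopf's lemma on $\partial\t_{\eps_n}$ you should note explicitly that $\rn\smallsetminus\overline{\t_{\eps_n}}$ is connected, so the strong maximum principle would force $u_n$ to be constant there, contradicting positivity and decay at infinity.
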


\begin{proof}
Arguing by contradiction, suppose $\|u_n\|_{H^1(\o)}\leq C_1$ for some $C_1>0$. Then, passing to a subsequence, $\|u_n\|_{H^1(\o)}\to C_2\geq 0$.

Set $w_n:=u_n/\|u_n\|_{H^1(\o)}$. It follows from \eqref{eq:translation} that
\begin{equation}\label{eq:bound for w_n}
\|w_n\|_{H^1(y+\o)}=\frac{\|u_n\|_{H^1(y+\o)}}{\|u_n\|_{H^1(\o)}}\leq 1\qquad\text{for every \ }y\in\z^N.
\end{equation}
Then,  $w_n\rh w$ weakly in $H^1(\o)$, $w_n\to w$ strongly in $L^p(\o)$ and a.e. in $\o$. As $\rn=\bigcup_{y\in\z^N}(y+\o)$, using \eqref{eq:bound for w_n}, passing to subsequences and employing the diagonal argument, we see that $w$ extends to a function $w\in H^1_\mathrm{loc}(\rn)$ and $w_n\rh w$ weakly in $H^1_\mathrm{loc}(\rn)$, $w_n\to w$ strongly in $L^p_\mathrm{loc}(\rn)$ and a.e. in $\rn$. Note in particular that $w\ge 0$.

Let $\vp\in \cC_c^\infty(\o)$. Then $\supp(\vp)\subset (y_1+\o)\cup\cdots\cup(y_m+\o)$ for some $y_1,\ldots,y_m\in\z^N$ and taking a $\cC^\infty$-partition of unity $\{\pi_i\}$ subordinated to this covering we see that
$$\lim_{n\to\infty}\langle w_n,\vp\rangle=\lim_{n\to\infty}\sum_{i=1}^m\langle w_n,\pi_i\vp\rangle=\sum_{i=1}^m\langle w,\pi_i\vp\rangle=\langle w,\vp\rangle.$$
As $w_n\to w$ in $L^p_\mathrm{loc}(\rn)$ and $u_n$ solves \eqref{eq:problem} with $\eps=\eps_n$,
$$\langle w_n,\vp\rangle=\|u_n\|_{H^1(\o)}^{p-2}\irn Q_{\eps_n}|w_n|^{p-2}w_n\vp\qquad\text{for every \ }\vp\in\cC^\infty_c(\rn),$$
and passing to the limit we obtain
\begin{equation}\label{eq:1}
\langle w,\vp\rangle=-C_2^{p-2}\irn w^{p-1}\vp\qquad\text{for every \ }\vp\in\cC^\infty_c(\rn).
\end{equation}
Fix $r>0$ small enough so that $B_{2r}(x)$ is contained in the union of at most $2^N$ sets of the form $y+\o$, $y\in\z^N$, for each $x\in\rn$. Then, for some positive constant $C$ that depends only on $\o$ and $p$, using \eqref{eq:bound for w_n} and the weak convergence $w_n\rh w$ in $H^1(y+\o)$, we get
\begin{equation}\label{eq:2}
\int_{B_{2r}(x)}|w|^p\leq\sum_{i=1}^{2^N}\int_{y_i+\o}|w|^p\leq C\sum_{i=1}^{2^N}\|w\|^p_{H^1(y_i+\o)}\leq C\,2^N<\infty\qquad\text{for all \ }x\in\rn.
\end{equation}
Statements \eqref{eq:1} and \eqref{eq:2} and Lemma \ref{lem:vanishing} yield  $w=0$.

Let $\chi\in\cC^\infty_c(\rn)$ be such that $0\leq\chi\leq 1$, $\chi(x)=1$ for $x\in\o$ and $\chi(x) = 0$ for $x\in \rn\smallsetminus U$, where $U$ is an open bounded subset of $\rn$ containing $\overline\o$ and such that $\dist(U,\,\z^N\smallsetminus\{0\})\geq \frac{1}{8}$. Then, as $u_n$ solves \eqref{eq:problem} with $\eps=\eps_n$,
\begin{equation}\label{eq:chi0}
\irn u_n\nabla u_n\cdot\nabla\chi+\irn\chi(|\nabla u_n|^2+u_n^2)=\langle u_n,\chi u_n\rangle=\irn\chi Q_{\eps_n}|u_n|^p,
\end{equation}
and for $\eps_n<\frac{1}{8}$ we get
\begin{equation}\label{eq:chi}
\io (|\nabla u_n|^2+u_n^2)-\io Q_{\eps_n}|u_n|^p\leq \irn\chi(|\nabla u_n|^2+u_n^2)-\irn\chi Q_{\eps_n}|u_n|^p= -\irn u_n\nabla u_n\cdot\nabla\chi.
\end{equation}
Dividing this inequality by $\|u_n\|_{H^1(\o)}^2$ gives
\begin{align*}
\io (|\nabla w_n|^2+w_n^2)-\|u_n\|_{H^1(\o)}^{p-2}\io Q_{\eps_n}|w_n|^p\leq C_0\int_U |w_n||\nabla w_n|
\end{align*}
for some $C_0>0$. Since $U$ is covered by finitely many subsets of the form $y+\o$ with $y\in\z^N$ and $\|w_n\|_{H^1(y+\o)}\leq 1$, we have that $\|w_n\|_{H^1(U)}\leq \overline C$ for all $n$ and some constant $\overline C$. Then, recalling that $\|u_n\|_{H^1(\o)}\leq C_1$ and $w_n\to 0$ in $L^p_\mathrm{loc}(\rn)$, we obtain
$$1=\lim_{n\to\infty}\left(\io (|\nabla w_n|^2+w_n^2)-\|u_n\|_{H^1(\o)}^{p-2}\io Q_{\eps_n}|w_n|^p\right)\leq\lim_{n\to\infty} C_0\int_U |w_n||\nabla w_n|=0.$$
This is a contradiction.
\end{proof}
\medskip

Recall that
$$w_n:=\frac{u_n}{\|u_n\|_{H^1(\o)}}.$$

\begin{lemma} \label{weaktozero}
After passing to a subsequence, $w_n\rh 0$ weakly in $H^1(\rn)$, $w_n\to 0$ in $L^p_\mathrm{loc}(\rn)$ and a.e. in $\rn$.
\end{lemma}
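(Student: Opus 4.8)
The plan is to show that the weak $H^1_{\mathrm{loc}}$-limit of $(w_n)$ is identically zero. First I would reproduce the compactness setup used in the proof of Lemma~\ref{lem:unbounded}: by \eqref{eq:translation} we have $\|w_n\|_{H^1(y+\o)}\le 1$ for every $y\in\z^N$, and $\rn=\bigcup_{y\in\z^N}(y+\o)$, so a diagonal argument yields a subsequence along which $w_n\rh w$ weakly in $H^1_{\mathrm{loc}}(\rn)$, $w_n\to w$ in $L^p_{\mathrm{loc}}(\rn)$ and a.e. in $\rn$, where $w\in H^1_{\mathrm{loc}}(\rn)$ and $w\ge 0$ (because $u_n>0$). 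It then remains to prove $w=0$; the asserted convergences are to be read in this $H^1_{\mathrm{loc}}$ sense, i.e.\ $\langle w_n,\vp\rangle\to 0$ for all $\vp\in\cC^\infty_c(\rn)$.

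Dividing \eqref{eq:problem} for $u_n$ by $\|u_n\|_{H^1(\o)}$ gives, for every $\vp\in\cC^\infty_c(\rn)$,
$$\langle w_n,\vp\rangle=\|u_n\|_{H^1(\o)}^{p-2}\irn Q_{\eps_n}|w_n|^{p-2}w_n\vp,$$
exactly as in the proof of Lemma~\ref{lem:unbounded}. The left-hand side converges to $\langle w,\vp\rangle$, and by Lemma~\ref{lem:unbounded} the factor $\|u_n\|_{H^1(\o)}^{p-2}\to\infty$ (this is where $p>2$ enters). The heart of the argument is to identify the limit of the remaining integral. Writing $Q_{\eps_n}=\mathbf{1}_{\t_{\eps_n}}-\mathbf{1}_{\rn\smallsetminus\t_{\eps_n}}$ and fixing a compact $K\supset\supp\vp$, one has $|\t_{\eps_n}\cap K|\to 0$, since $\t_{\eps_n}\cap K$ is contained in a bounded number of balls of radius $\eps_n\to 0$. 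As $w_n\to w$ in $L^p_{\mathrm{loc}}(\rn)$, the Nemytskii operator $s\mapsto|s|^{p-2}s$ gives $|w_n|^{p-2}w_n\to w^{p-1}$ in $L^{p/(p-1)}_{\mathrm{loc}}(\rn)$; combined with $\mathbf{1}_{\t_{\eps_n}}\to 0$ in $L^q_{\mathrm{loc}}$ for every finite $q$, the $\t_{\eps_n}$-part of the integral tends to $0$, while $\mathbf{1}_{\rn\smallsetminus\t_{\eps_n}}\to 1$ in $L^q_{\mathrm{loc}}$ makes the complementary part tend to $\irn w^{p-1}\vp$. Hence
$$\irn Q_{\eps_n}|w_n|^{p-2}w_n\vp\longrightarrow-\irn w^{p-1}\vp.$$

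Putting the pieces together, for every nonnegative $\vp\in\cC^\infty_c(\rn)$ we get $\|u_n\|_{H^1(\o)}^{p-2}\bigl(-\irn w^{p-1}\vp+o(1)\bigr)=\langle w,\vp\rangle+o(1)$. If $\irn w^{p-1}\vp>0$, the left-hand side would tend to $-\infty$ while the right-hand side stays bounded, a contradiction; thus $\irn w^{p-1}\vp\le 0$. Since $w\ge 0$ also forces $\irn w^{p-1}\vp\ge 0$, we conclude $\irn w^{p-1}\vp=0$ for all such $\vp$, whence $w=0$. Consequently $w_n\rh 0$ weakly in $H^1_{\mathrm{loc}}(\rn)$, $w_n\to 0$ in $L^p_{\mathrm{loc}}(\rn)$ and a.e.\ in $\rn$.

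I expect the main obstacle to be precisely the passage to the limit in the nonlinear term: one has to use simultaneously that the self-focusing core $\t_{\eps_n}$ collapses onto $\z^N$ (so its contribution vanishes in the limit) and that the defocusing region exhausts $\rn$ (producing the term $-\irn w^{p-1}\vp$), and then balance this against the blow-up of $\|u_n\|_{H^1(\o)}^{p-2}$ from Lemma~\ref{lem:unbounded} to force $\irn w^{p-1}\vp=0$. Everything else is routine once the diagonal-subsequence setup and the $L^p_{\mathrm{loc}}$/a.e.\ convergences are in place.
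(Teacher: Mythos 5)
Your proposal is correct and follows essentially the same route as the paper: the same diagonal/local compactness setup borrowed from Lemma~\ref{lem:unbounded}, the same identity $\langle w_n,\vp\rangle=\|u_n\|_{H^1(\o)}^{p-2}\irn Q_{\eps_n}|w_n|^{p-2}w_n\vp$, and the same balancing of the blow-up of $\|u_n\|_{H^1(\o)}^{p-2}$ against the bounded left-hand side to force $\irn w^{p-1}\vp=0$ and hence $w=0$. The only difference is that you spell out the limit $\irn Q_{\eps_n}|w_n|^{p-2}w_n\vp\to-\irn w^{p-1}\vp$ (via $|\t_{\eps_n}\cap\supp\vp|\to 0$ and Nemytskii continuity), which the paper states without detail.
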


\begin{proof}
By the same argument as in the proof of Lemma \ref{lem:unbounded} we see that there exists $w\in H^1_\mathrm{loc}(\rn)$ such that after passing to subsequences, $w_n\rh w$ weakly in $H^1_\mathrm{loc}(\rn)$, $w_n\to w$ in $L^p_\mathrm{loc}(\rn)$ and a.e. in $\rn$,  and $w\geq 0$. Since $u_n$ solves \eqref{eq:problem} with $\eps=\eps_n$, we obtain
$$
\langle w_n,\vp\rangle=\|u_n\|_{H^1(\o)}^{p-2}\irn Q_{\eps_n}|w_n|^{p-2}w_n\vp\qquad\text{for every \ }\vp\in\cC^\infty_c(\rn)
$$
and, as $\lim_{n\to\infty}\irn Q_{\eps_n}|w_n|^{p-2}w_n\vp=-\irn w^{p-1}\vp$ and $\|u_n\|_{H^1(\o)}\to \infty$, it follows that 
\[
\irn w^{p-1}\vp=0.
\]
Hence $w=0$.
\end{proof}
\medskip

\begin{proof}[Proof of Theorem \ref{thm:main_concentration}]
Let $U$ be an open bounded subset of $\rn$ such that $U\supset[-\frac{1}{2},\frac{1}{2}]^N$ and $\partial U\cap\z^N=\emptyset$. Without loss of generality, we assume that $\o\subset U$. Set $Z_1:=U\cap\z^N$ and $Z_2:=\z^N\smallsetminus Z_1$. Fix $0<\delta<\min\{\frac{1}{2},\,\dist(Z_1,\partial U)\}$ and an open bounded subset $V$ of $\rn$ such that $\overline U\subset V$ and $\dist(V,Z_2)>0$. Let $\psi\in\cC_c^\infty(\rn)$ be such that $0\leq\psi\leq 1$, $\psi(x)=1$ for $x\in U\smallsetminus B_\delta(Z_1)$ and $\psi(x) = 0$ for $x\in B_{\delta/2}(Z_1)$ and $x\in \rn\smallsetminus V$. Equality \eqref{eq:chi0} holds also for $\psi$. We re-write it as
\begin{equation} \label{ch}
\irn\psi(|\nabla u_n|^2+u_n^2) -\irn Q_{\eps_n}\psi|u_n|^p  = -\irn u_n\nabla u_n\cdot \nabla\psi .
\end{equation}
Let $\eps_n<\min\{\frac{\delta}{2},\,\dist(V,Z_2)\}$. Then, $\psi(x)=0$ for $x\in \t_{\eps_n}$ and therefore
$$\int_{U\smallsetminus B_\delta(Z_1)}|u_n|^p\leq\int_{\rn\smallsetminus \t_{\eps_n}}\psi|u_n|^p=-\irn Q_{\eps_n}\psi|u_n|^p.$$
Combining this inequality with \eqref{ch} we obtain
\begin{align*}
\int_{U\smallsetminus B_\delta(Z_1)}(|\nabla u_n|^2+u_n^2) + \int_{U\smallsetminus B_\delta(Z_1)}|u_n|^p 
& \le \irn\psi(|\nabla u_n|^2+u_n^2) -\irn Q_{\eps_n}\psi|u_n|^p\le C \int_V|u_n|\,|\nabla u_n|
\end{align*}
 for some $C>0$, and dividing by $\|u_n\|_{H^1(\Omega)}^2$ we get
\begin{equation*}
\int_{U\smallsetminus B_\delta(Z_1)}(|\nabla w_n|^2+w_n^2) + \|u_n\|_{H^1(\Omega)}^{p-2}\int_{U\smallsetminus B_\delta(Z_1)}|w_n|^p\leq C \int_V|w_n|\,|\nabla w_n|.
\end{equation*}
Since $V$ is covered by finitely many subsets of the form $y+\o$ with $y\in\z^N$ and $\|w_n\|_{H^1(y+\o)}\leq 1$, we have that $\|w_n\|_{H^1(V)}\leq C_1$ for some constant $C_1$ and,
as $w_n\to 0$ in $L^2(V)$, we obtain
\begin{equation} \label{equalzero} 
\lim_{n\to\infty}\Big(\int_{U\smallsetminus B_\delta(Z_1)}(|\nabla w_n|^2+w_n^2) + \|u_n\|_{H^1(\Omega)}^{p-2}\int_{U\smallsetminus B_\delta(Z_1)}|w_n|^p\Big) = 0.
\end{equation}
This immediately implies
\begin{align*}
\lim_{n\to\infty}\frac{\int_{U\smallsetminus B_\delta(Z_1)}(|\nabla u_n|^2+u_n^2)}{\int_U(|\nabla u_n|^2+u_n^2)}=\lim_{n\to\infty}\frac{\int_{U\smallsetminus B_\delta(Z_1)}(|\nabla w_n|^2+w_n^2)}{\int_U(|\nabla w_n|^2+w_n^2)}=0,
\end{align*}
which yields the first statement of Theorem \ref{thm:main_concentration}.

To prove the second one we first show that $\|u_n\|_{H^1(\o)}^{p-2}\int_U|w_n|^p$ is bounded away from $0$ . Let $\chi\in\cC_c^\infty(\rn)$ be such that $0\leq\chi\leq 1$, $\chi(x)=1$ for $x\in U$ and $\chi(x) = 0$ for $x\in \rn\smallsetminus V$. As in \eqref{ch}, we have
\[
\irn(\chi(|\nabla u_n|^2+u_n^2) -\irn Q_{\eps_n}\chi|u_n|^p  = -\irn u_n\nabla u_n\cdot \nabla\chi
\]
which for $\eps_n<\dist(V,Z_2)$ gives
\begin{align*}
 \int_U(|\nabla u_n|^2+u_n^2) - \int_U Q_{\eps_n}|u_n|^p \le \irn\chi(|\nabla u_n|^2+u_n^2) -\irn Q_{\eps_n}\chi|u_n|^p\le C \int_V|u_n|\,|\nabla u_n|.
\end{align*}
Dividing by $\|u_n\|_{H^1(\Omega)}^2$ and arguing as in \eqref{equalzero}, we then obtain
\[
\lim_{n\to\infty}\Big(\int_U(|\nabla w_n|^2+w_n^2) - \|u_n\|_{H^1(\Omega)}^{p-2}\int_U Q_{\eps_n}|w_n|^p\Big) = 0.
\]
Since $1=\io(|\nabla w_n|^2+w_n^2)\leq \int_U(|\nabla w_n|^2+w_n^2)$, it follows that 
\[
1\leq\lim_{n\to\infty}\|u_n\|_{H^1(\Omega)}^{p-2}\int_U Q_{\eps_n}|w_n|^p\leq\lim_{n\to\infty}\|u_n\|_{H^1(\Omega)}^{p-2}\int_U|w_n|^p. 
\]
As a consequence, using \eqref{equalzero} again, we get
\[
\lim_{n\to\infty}\frac{\int_{U\smallsetminus B_\delta(Z_1)}|u_n|^p}{\int_U|u_n|^p} = \lim_{n\to\infty}\frac{\|u_n\|_{H^1(\o)}^{p-2}\int_{U\smallsetminus B_\delta(Z_1)}|w_n|^p}{ \|u_n\|_{H^1(\o)}^{p-2}\int_U|w_n|^p} = 0. 
\]
from which the second statement of Theorem \ref{thm:main_concentration} follows.

The last statement follows from Lemma \ref{lem:unbounded}.
\end{proof}

\begin{remark}
\emph{
Note that in the special case when $U=\o$ Theorem \ref{thm:main_concentration} yields
\[
\lim_{n\to\infty}\frac{\int_{B_\delta(0)}(|\nabla u_n|^2+u_n^2)}{\int_{\Omega}(|\nabla u_n|^2+u_n^2)}=1 \qquad\text{and} \qquad\lim_{n\to\infty}\frac{\int_{B_\delta(0)}|u_n|^p}{\int_{\Omega} |u_n|^p}=1.
\]
Note also that by inspecting the proof of Theorem \ref{thm:main_concentration} we see that the above result still holds if $\Omega$ is replaced by $y+\o$ provided $y\in\zn\smallsetminus\{0\}$ is such that $\|u_n\|_{H^1(y+\Omega)}\to\infty$.  In this case we have 
\begin{equation} \label{aaa}
\lim_{n\to\infty}\frac{\int_{B_\delta(y)}(|\nabla u_n|^2+u_n^2)}{\int_{y+\Omega}(|\nabla u_n|^2+u_n^2)}=1 \qquad\text{and} \qquad\lim_{n\to\infty}\frac{\int_{B_\delta(y)}|u_n|^p}{\int_{y+\Omega} |u_n|^p}=1.
\end{equation}
More precisely,  we take $w_n := u_n/\|u_n\|_{H^1(y+\o)}$, and as in Lemma \ref{weaktozero} we show that $w_n\rh 0$ weakly in $H^1(y+\o)$ (the argument is in fact simpler because we do not need $w$ to be defined outside of $y+\o$). Next we choose an open set $V$ such that $\overline \o\subset V\subset (-\frac34,\frac34)^N$ and, assuming in addition that $\o$ has a regular  boundary ($\partial\o\in\cC^{0,1}$ suffices), we continuously extend  $w_n|_{y+\o}$ to $\widetilde w_n\in H^1(y+V)$ \cite[Theorem 7.25]{gt}. Now we can follow the argument of Theorem \ref{thm:main_concentration}. Thus \eqref{aaa} holds for \emph{all} regular $\o$ if $\|u_n\|_{H^1(y+\o) }\to\infty$. Using this it is easy to see that  it must hold for all $\o$ satisfying the assumptions at the beginning of this section. }

\emph{Observe that local concentration at more than one point does not exclude global concentration as in \eqref{eq:concentration_as}. It is unclear to us whether local concentration at a unique point yields global concentration. }
\end{remark}

\section{Open problems} \label{op}

As mentioned in the introduction, replacing $\t_\eps$ by a single ball $B_\eps(0)$ allows a more accurate description of the concentration behavior. Indeed, after proper rescaling, the solutions exhibit a non-trivial limiting profile \cite{fw} and the concentration in the $H^1$ and $L^p$ norms holds globally, as indicated by \eqref{eq:concentration_as}, see \cite{as}. Establishing the veracity of these facts for $\t_\eps$ seems to be rather delicate. We formulate some open questions.  

\begin{problem}
\emph{Let $v_n(x) := \eps_n^{\frac2{p-2}}u_n(\eps_nx)$ where $u_n$ is a positive least energy solution to problem \eqref{eq:problem} for $\eps=\eps_n$. Then $v_n$ satisfies the equation
\begin{equation} \label{rescaled}
-\Delta v_n + \eps^2v_n = \tilde Q_{\eps}(x)|v_n|^{p-2}v_n, \qquad v_n\in H^1(\rn),
\end{equation}
with $\eps=\eps_n$, where
\begin{equation*}
\tilde Q_\eps(x):=
\begin{cases}
1 & \text{if \ }x\in\tilde\t_\eps:=\bigcup\limits_{y\in\z^N}B_1(\frac y{\eps}),\\
-1 & \text{otherwise}.
\end{cases}
\end{equation*}
The corresponding functional is
\[
I_{\eps}(v) := \frac12\|v\|^2_\eps -\frac1p\irn \tilde Q_{\eps}|v|^p,\qquad\text{where \  \ }\|v\|^2_\eps:=\irn(|\nabla v|^2+\eps^2v^2),
\]
and $v_n$ is a least energy solution of the rescaled equation \eqref{rescaled}. Let $\cM_\eps$ be the Nehari manifold corresponding to \eqref{rescaled}. Fix $\vp\in\cC^\infty_c(B_1(0))$, $\vp\neq 0$. Then it is easy to see that $t_\eps\vp\in \cM_\eps$ if 
\[
t_\eps = \left(\frac{\|\vp\|_\eps^2}{\irn \tilde Q_\eps|\vp|^p}\right)^{\frac1{p-2}} = \left(\frac{\|\vp\|_\eps^2}{\irn|\vp|^p}\right)^{\frac1{p-2}},
\]
which gives (cf. \eqref{eq:energy_on_nehari})
$$
\frac{p-2}{2p}\|v_n\|^2_{\eps_n} =  \inf_{v\in\cM_{\eps_n}}I_{\eps_n}(v) \leq  I_{\eps_n}(t_{\eps_n}\vp) = \frac{p-2}{2p}\left(\frac{\|\vp\|^2_{\eps_n}}{\Big(\irn |\vp|^p\Big)^{2/p}}\right)^\frac{p}{p-2}.
$$
Since the right-hand side is bounded as $\eps_n\to 0$, the sequence $(v_n)$ is bounded in $D^{1,2}(\rn)$. So passing to a subsequence, $v_n\rh v$ weakly in $D^{1,2}(\rn)$ and $v_n\to v$ strongly in $L^p_\mathrm{loc}(\rn)$ and a.e. in $\rn$. Using this and testing \eqref{rescaled} with $\vp\in\cC_c^\infty(\rn)$ it is easy to see that $v$ is a solution to the equation
\[
-\Delta w = Q(x)|w|^{p-2}w,
\]
where
\begin{equation*}
Q(x):=
\begin{cases}
1 & \text{if \ }x\in B_1(0),\\
-1 & \text{otherwise}.
\end{cases}
\end{equation*}}

\emph{When $\t_\eps$ is replaced by $B_\eps(0)$ and $N\ge 3$ it is known that the weak limit $v$ is a positive least enegy solution for  this equation in $D^{1,2}(\rn)\cap L^p(\rn)$ and that it is a limiting profile for $v_n$, that is, $v_n\to v$ strongly in $D^{1,2}(\rn)\cap L^p(\rn)$; see \cite{fw,chs}.}

\emph{It would be interesting to find out whether this also holds true in our case, that is, whether the weak limit $v$ is positive or whether $v=0$. If $v$ is positive, is it true that $v_n\to v$ strongly in $D^{1,2}(\rn)\cap L^p(\rn)$?}
\end{problem}

\begin{problem}
\emph{Are the following global concentration statements
\begin{equation*}
\lim_{n\to\infty}\frac{\int_{B_\delta(0)}(|\nabla u_n|^2+u_n^2)}{\irn(|\nabla u_n|^2+u_n^2)}=1\qquad\text{and}\qquad\lim_{n\to\infty}\frac{\int_{B_\delta(0)}|u_n|^p}{\irn |u_n|^p}=1
\end{equation*}
true, after a suitable translation by elements of $\z^N$?}

\emph{We note that if instead of $\z^N$ we take a self-focusing core having a finite number of points, the corresponding statements are true, see \cite{as}}.
\end{problem}

\begin{problem}
\emph{Do we have $\lim_{n\to\infty}u_n=0$ in $L^p(\rn\smallsetminus B_\delta(0))$, or at least in $L^p(\Omega\smallsetminus B_\delta(0))$ for $N\ge 3$ and $p\in(\frac{2N-2}{N-2},2^*)$, as in \cite[Remark 6.4]{css} and \cite{as}?}
\end{problem}

\medskip

\noindent\textbf{Acknowledgements.}
M. Clapp thanks Stockholm University and A. Szulkin thanks Instituto de Matemáti\-cas Unidad Juriquilla UNAM for the kind hospitality.  
A. Saldaña was supported by UNAM-DGAPA-PAPIIT (Mexico) grant IN102925  and by CONAHCYT (Mexico) grant CBF2023-2024-116. M. Clapp and  A. Szulkin were supported in part by a grant from the Magnuson foundation of the Royal Swedish Academy of Sciences. 

\medskip

\bigskip

\begin{flushleft}
\textbf{Mónica Clapp}\\
Instituto de Matemáticas\\
Universidad Nacional Autónoma de México \\
Campus Juriquilla\\
76230 Querétaro, Qro., Mexico\\
\texttt{monica.clapp@im.unam.mx} 

\bigskip 

\textbf{Alberto Saldaña}\\
Instituto de Matemáticas\\
Universidad Nacional Autónoma de México \\
Circuito Exterior, Ciudad Universitaria\\
04510 Coyoacán, Ciudad de México, Mexico\\ \texttt{alberto.saldana@im.unam.mx}

\bigskip

\textbf{Andrzej Szulkin}\\
Department of Mathematics\\
Stockholm University\\
106 91 Stockholm, Sweden\\
\texttt{andrzejs@math.su.se}
\end{flushleft}

\end{document}